\documentclass[12pt,reqno]{amsart}

\usepackage{amsmath,amssymb,amsthm,amsfonts}
\usepackage{hyperref}
\usepackage{xcolor}
\usepackage[all]{xy}
\usepackage{enumerate}

\oddsidemargin=0.4in
\evensidemargin=0.4in
\topmargin=-0.2in
\textwidth=15cm
\textheight=23cm

\theoremstyle{definition}
\newtheorem{definition}{Definition}[section]

\theoremstyle{plain}
\newtheorem{theorem}[definition]{Theorem}

\newtheorem{corollary}[definition]{Corollary}
\newtheorem{lemma}[definition]{Lemma}

\theoremstyle{remark}

\newtheorem{example}[definition]{Example}

\DeclareMathOperator{\OO}{O}
\DeclareMathOperator{\Rad}{Rad}

\DeclareMathOperator{\SSS}{S}
\DeclareMathOperator{\Alt}{A}
\newcommand{\K}{\mathcal{K}}
\newcommand{\CC}{\mathcal{C}}
\newcommand{\C}{\mathrm{C}}

\title{Some necessary conditions for compatibility of groups}

\author[Ding]{Zhaochen Ding}
\author[Verret]{Gabriel Verret}
\address{Department of Mathematics, University of Auckland, Private Bag 92019, Auckland 1142, New Zealand}
\email{dzha470@aucklanduni.ac.nz,g.verret@auckland.ac.nz}

\begin{document}

\begin{abstract}
    Two groups $L_1$ and $L_2$ are compatible if there exists a finite group $G$
    with isomorphic normal subgroups $N_1$ and $N_2$ such that $L_1\cong G/N_1$ and $L_2\cong G/N_2$.
    In this paper, we give new necessary conditions for two groups to be compatible.
%    In particular, we have that if all composition factors of compatible groups $L_1$ and $L_2$ are nonabelian,    then $L_1$ and $L_2$ have nontrivial isomorphic normal subgroups. (CHECK LAST TWO SENTENCES)
\end{abstract}

\maketitle

\section{Introduction}
Throughout this paper, all groups are finite. We say that two groups $L_1$ and $L_2$ are \emph{compatible} if there exists a group $G$ with isomorphic normal subgroups $N_1$ and $N_2$ such that $L_1\cong G/N_1$ and $L_2\cong G/N_2$. Given two groups, one would like to determine whether they are compatible; this is the \emph{compatibility problem} for (abstract) groups.
Despite its conceptual simplicity, the compatibility problem is surprisingly challenging.
For instance, whether the alternating group $\Alt_4$ and the cyclic group $\C_{12}$ are compatible remains an open question \cite[Problem 20.21]{khukhro2024unsolvedproblemsgrouptheory}.

The compatibility problem was originally motivated by the study of subconstituents of permutation groups.
In the 1960s, interest in the following problem arose: how to determine the possible structure of primitive permutation groups with a given subconstituent.
As this problem became better understood, it was realised that there is a crucial difference between self-paired subconstituents and non-self-paired subconstituents.
This led to the following problem, which we refer to as the \emph{compatibility problem} for permutation groups: given two permutation groups, can they arise as paired subconstituents? 
Contributions to this problem include \cite{cameron1972Permutation,FAWCETT2018247,giudici2019Arctransitive, goldschmidt1980Automorphisms, knapp1973point,LI2004749, quirin1971Primitive, sims1967Graphs,weiss1974symmetrische}. 
Two regular permutation groups are compatible if and only if they are compatible as abstract groups, hence the compatibility problem for abstract groups is an instance of the compatibility problem for permutation groups.

One of the most important basic necessary condition for two groups to be compatible is \cite[Lemma 4.1]{quirin1971Primitive}, which Quirin attributes to (an unpublished result of) Sims, so we will refer to it as Sims' Lemma. In~\cite{quirin1971Primitive}, it is stated in terms of permutation groups, but we give a related version for abstract groups, see Lemma~\ref{lem:Sims}. As an immediate corollary (see Corollary~\ref{cor:compsubnormal}), one can show that compatible groups have compatible subnormal series, that is, subnormal series whose factors are the same in the same order (see Definition~\ref{def:compseries} for a more formal definition). While this is a very basic result, until now, there was no example known of groups that have compatible subnormal series but that are not compatible. Our main goal in this paper is to establish new necessary conditions for two groups to be compatible. These results allow us to show many pairs of groups are not compatible, including some that have compatible subnormal series (see Example~\ref{examp:A6Z7}). Of our main results, the simplest to state is that compatible groups with no abelian composition factors have compatible normal series (Theorem~\ref{thm:main}). This is a corollary of stronger but more technical results which can be found in Section~\ref{sec:main}.

%Most advances in this area have focused on primitive or quasiprimitive permutation groups (see, for example, \cite{cameron1972Permutation,quirin1971Primitive,knapp1973point,giudici2019Arctransitive}).  For example, Knapp \cite{knapp1973point} showed that two compatible quasiprimitive permutation are isomorphic as abstract groups but this is not true for imprimitive permutation groups.

%The remainder of the paper is structured as follows. 
%In Section 2, we outline preliminary definitions and results necessary for subsequent discussions. 
%Section 3 presents Sims' Lemma, which plays an important role in establishing our main results. 
%In Section 4, we introduce the concept of a characteristic functor, providing a key methodological foundation for the proofs. 
%Section 5 presents proofs of the primary theorems and examples.

\section{Preliminaries}

%In this section, we set some notation and introduce some known results from finite group theory.

%\begin{definition}
%    Let $L_1$ and $L_2$ be groups.
%    If there exists a group $G$ with isomorphic normal subgroups $N_1$ and $N_2$
%    such that $L_1\cong G/N_1$ and $L_2\cong G/N_2$, then we say that $L_1$ and $L_2$ are \emph{compatible}
%    and $G$ is a \emph{witness} for the compatibility of $(L_1,L_2)$.
%\end{definition}

In this paper, classes of groups are closed under isomorphism and include the trivial group.

\begin{definition}
    Let $G$ be a group and let $\K$ be a class of groups.
    Define
    \[\OO^{\K}(G):=\bigcap_{A\trianglelefteq G,G/A\in\K}A\]
    and
    \[\OO_{\K}(G):=\prod_{A\trianglelefteq G,A\in\K}A.\]
    Both $\OO^{\K}(G)$ and $\OO_{\K}(G)$ are characteristic subgroups of $G$.
    We say that $\OO^{\K}(G)$ is the \emph{$\K$-residue}
    and $\OO_{\K}(G)$ is the \emph{$\K$-radical} of $G$.
\end{definition}

A \emph{Melnikov formation} is a class of groups closed under normal subgroups, quotients and extensions. It can be seen that a class of groups $\K$ is a Melnikov formation
if and only if there exists a set $\Delta$ of simple groups such that $\K$
consists of groups whose composition factors are in $\Delta$.

\begin{example}\label{examp:rad}
    If $\K$ is the class of all solvable groups, then $\K$ is a Melnikov formation and $\OO_\K(G)$
    is  denoted $\Rad(G)$.
\end{example}

In the following lemma, we introduce some basic results for Melnikov formation.
The proofs are straightforward but we include them for completeness.

\begin{lemma}\label{lem:OK}
    Let $\K$ be a Melnikov formation, let $G$ be a group, let $N,K\trianglelefteq G$ and let $f:G\rightarrow H$ be a homomorphism.
    The following statements hold.
    \begin{enumerate}[\rm (i)]
        \item  If $N,K\in\K$, then $N\cap K\in\K$ and $NK\in\K$.\label{lem:OK:i}
        \item  If $G/N\in\K$ and $G/K\in\K$, then $G/(N\cap K)\in\K$.\label{lem:OK:ii}
        \item  $\OO_\K(G)\in\K$ and $G/\OO^{\K}(G)\in\K$.\label{lem:OK:iii}
        \item  If $\OO^\K(G)\le N$, then $\OO^\K(G)=\OO^\K(N)$.\label{lem:OK:iv}
        \item  $\OO_\K(G)\cap N=\OO_\K(N)$.\label{lem:OK:v}
        \item  $f(\OO^\K(G))=\OO^\K(f(G))$. \label{lem:OK:vi}
        \item  $f(\OO_\K(G))\le \OO_\K(f(G))$.\label{lem:OK:vii}
    \end{enumerate}
\end{lemma}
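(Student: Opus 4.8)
The plan is to prove each part using only the three defining closure properties of a Melnikov formation—closure under normal subgroups, quotients, and extensions—keeping the composition-factor characterisation in reserve. The two tools I would lean on throughout are the second and third isomorphism theorems, and the standard fact that a subgroup characteristic in a normal subgroup is itself normal in the whole group. Throughout, note that the trivial group lies in $\K$, so $G$ itself is always admissible in the definition of $\OO^\K(G)$ and the two operators are well defined.

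For \eqref{lem:OK:i}, I would observe that $N\cap K\trianglelefteq N$ gives $N\cap K\in\K$ by closure under normal subgroups, while $NK/N\cong K/(N\cap K)\in\K$ together with $N\in\K$ gives $NK\in\K$ by closure under extensions. Part \eqref{lem:OK:ii} is the first place requiring care: rather than embedding $G/(N\cap K)$ into $G/N\times G/K$, which would demand closure under arbitrary subgroups that we do not have, I would work with the normal subgroup $N/(N\cap K)$ of $G/(N\cap K)$. Since $N/(N\cap K)\cong NK/K\trianglelefteq G/K\in\K$, it lies in $\K$, and the quotient $(G/(N\cap K))/(N/(N\cap K))\cong G/N\in\K$, so closure under extensions concludes. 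Part \eqref{lem:OK:iii} then follows by applying \eqref{lem:OK:i} and \eqref{lem:OK:ii} finitely many times, using finiteness of $G$: $\OO_\K(G)$ is a finite product of normal $\K$-subgroups, and $\OO^\K(G)$ is a finite intersection of normal subgroups with $\K$-quotient.

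Parts \eqref{lem:OK:iv} and \eqref{lem:OK:v} are each handled by two inclusions. For \eqref{lem:OK:iv}, the inclusion $N/\OO^\K(G)\trianglelefteq G/\OO^\K(G)\in\K$ gives $N/\OO^\K(G)\in\K$, hence $\OO^\K(N)\le\OO^\K(G)$; conversely $G/\OO^\K(N)$ is an extension of $N/\OO^\K(N)\in\K$ by $G/N\in\K$—the latter being a quotient of $G/\OO^\K(G)$—so $G/\OO^\K(N)\in\K$ and thus $\OO^\K(G)\le\OO^\K(N)$, where I use that $\OO^\K(N)$ is characteristic in $N\trianglelefteq G$ hence normal in $G$. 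For \eqref{lem:OK:v}, $\OO_\K(G)\cap N$ is normal in $\OO_\K(G)\in\K$ and normal in $N$, so it is a normal $\K$-subgroup of $N$ and lies in $\OO_\K(N)$; conversely $\OO_\K(N)$ is characteristic in $N\trianglelefteq G$, hence a normal $\K$-subgroup of $G$, giving $\OO_\K(N)\le\OO_\K(G)\cap N$.

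I expect \eqref{lem:OK:vi} to be the main obstacle. Reducing to the surjection $f\colon G\to G/K$ with $K=\ker f$, the correspondence theorem identifies $\OO^\K(G/K)=L/K$, where $L=\bigcap\{A : K\le A\trianglelefteq G,\ G/A\in\K\}$, while $f(\OO^\K(G))=\OO^\K(G)K/K$. The crux is therefore the subgroup identity $\OO^\K(G)K=L$. One inclusion holds because $G/\OO^\K(G)K$ is a quotient of $G/\OO^\K(G)\in\K$, so $\OO^\K(G)K$ is one of the admissible $A$'s and $L\le\OO^\K(G)K$; the reverse holds because $G/L\in\K$ makes $L$ admissible in the definition of $\OO^\K(G)$, giving $\OO^\K(G)\le L$, while $K\le L$ is immediate. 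Finally \eqref{lem:OK:vii} is brief: $f(\OO_\K(G))$ is a quotient of $\OO_\K(G)\in\K$, hence in $\K$, and it is normal in $f(G)$, so it is contained in $\OO_\K(f(G))$. I would remark that only an inequality can be expected here, since $\OO_\K$ need not commute with quotients, which explains the asymmetry between \eqref{lem:OK:vi} and \eqref{lem:OK:vii}.
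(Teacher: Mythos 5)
Your proposal is correct and follows essentially the same route as the paper's proof: the same two-inclusion arguments for (iv)--(vi) using the isomorphism theorems, closure properties, and the fact that characteristic subgroups of normal subgroups are normal. The only cosmetic differences are in (i), where you build $NK$ as an extension of $NK/N\cong K/(N\cap K)$ by $N$ while the paper uses $NK/(N\cap K)\cong N/(N\cap K)\times K/(N\cap K)$, and in (vi), where you phrase the paper's preimage argument through the correspondence theorem; both variants are equally valid.
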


\begin{proof}
    {\rm (i)} Since $N\cap K\trianglelefteq N$, we have $N\cap K\in\K$.
    Notice that $NK/(N\cap K)\cong N/(N\cap K)\times K/(N\cap K)\in\K$.
    This implies that $NK\in\K$.

    {\rm (ii)} Notice that $N/(N\cap K)\cong NK/K\in\K$ and $(G/(N\cap K))/(N/(N\cap K))\cong G/N\in\K$.
    This implies that $G/(N\cap K)\in\K$.

    {\rm (iii)}  By the definition of $\OO_\K$ 
    and $\OO^\K$, this follows from (\ref{lem:OK:i}) and (\ref{lem:OK:ii}).

    {\rm (iv)} Notice that $N/\OO^\K(G)\trianglelefteq G/\OO^\K(G)\in\K$.
    We have that $N/\OO^{\K}(G)\in\K$.
    Hence
    $\OO^\K(G)\ge \OO^\K(N)$. 
    On the other hand, notice that 
    $(G/\OO^\K(N))/(N/\OO^\K(N))\cong G/N\in\K$
    and $N/\OO^\K(N)\in\K$.
    It follows that $G/\OO^\K(N)\in\K$ because
    $G/\OO^\K(N)$ is an extension of $G/N$ by $N/\OO^\K(N)$.
    This implies that $\OO^\K(G)\le \OO^\K(N)$.

    {\rm (v)} Since $\OO_\K(G)\cap N\trianglelefteq \OO_\K(G)\in\K$,
    we have $\OO_\K(G)\cap N\in\K$.
    Also note that $\OO_\K(G)\cap N\trianglelefteq N$.
    This implies that $\OO_\K(G)\cap N\le \OO_\K(N)$. 
    On the other hand, $\OO_\K(N)\in\K$ and 
    $\OO_\K(N)\trianglelefteq G$. Hence
    $\OO_\K(N)\le \OO_\K(G)\cap N$.

    {\rm (vi)} Note that \[f(G)/f(\OO^\K(G))\cong G/(\OO^\K(G)\ker(f))\in\K.\]
    We have $f(\OO^\K(G))\ge \OO^\K(f(G))$.
    For the other direction, let $S$ be the preimage of $\OO^\K(f(G))$
    with respect to $f$. Then $G/S\cong f(G)/(\OO^\K(f(G)))\in\K$ 
    which implies that $S\ge \OO^\K(G)$.
    Hence $\OO^\K(f(G))=f(S)\ge f(\OO^\K(G))$.
    This implies that $f(\OO^\K(G))=\OO^\K(f(G))$.

    {\rm (vii)} Since $\OO_\K(G)$ is normal in $G$, we have that $f(\OO_\K(G))$
    is normal in $f(G)$. Also by (\ref{lem:OK:iii}), $f(\OO_\K(G))\in\K$.
    This implies that $f(\OO_\K(G))\le \OO_\K(f(G))$.
\end{proof}

%
%The following results are standard and presented without proof.
%We refer to \cite{kurzweil2004theory} for further reading.

%\begin{lemma}
%    Let $G$ be a group and let $H$ and $K$ be subnormal subgroups of $G$.
%    \begin{itemize}
%        \item [\rm (i)] $H\cap K$ is also a subnormal subgroup of $G$.
%        \item [\rm (ii)] If $L$ is a subnormal subgroup of $H$, then $L$ is a subnormal subgroup of $G$.
%        \item [\rm (iii)] If $L$ is a subgroup of $G$, then $L\cap H$ is a subnormal subgroup of $L$.
%        \item [\rm (iv)] If $L$ is a normal subgroup of $G$, then $HL/L$ is a subnormal subgroup of $G/L$.
%    \end{itemize}
%\end{lemma}

\begin{theorem}
    [Wielandt's Theorem {\cite{wielandt1939Verallgemeinerung}}]\label{thm:Wielandt}
    Let $G$ be a group and let $H$ and $K$ be subnormal subgroups of G. 
    Then $\langle H,K\rangle$ is also subnormal in G.
\end{theorem}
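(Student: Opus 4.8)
The plan is to prove the statement by induction on $|G|$, pushing the induction downward through normal closures. First I would record the routine facts about subnormality (writing $X \trianglelefteq\trianglelefteq G$ for ``$X$ is subnormal in $G$''), each of which follows by intersecting or projecting a fixed subnormal series: subnormality is transitive; if $X \trianglelefteq\trianglelefteq G$ and $X \le Y \le G$ then $X \trianglelefteq\trianglelefteq Y$; if $X \trianglelefteq\trianglelefteq G$ then $X \cap Y \trianglelefteq\trianglelefteq Y$ for every $Y \le G$; and for $M \trianglelefteq G$ with $M \le X$ one has $X \trianglelefteq\trianglelefteq G$ if and only if $X/M \trianglelefteq\trianglelefteq G/M$ (so images of subnormal subgroups are subnormal and subnormality lifts through normal subgroups).

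The observation that powers the induction is that a \emph{proper} subnormal subgroup has a \emph{proper} normal closure: if $X \trianglelefteq\trianglelefteq G$ with $X \ne G$, choose a subnormal series $X = X_0 \lhd \cdots \lhd X_{n-1} \lhd X_n = G$ with $X_{n-1} < G$; since $X_{n-1} \lhd G$ we get $X^G \le X_{n-1}^{\,G} = X_{n-1} < G$. Equivalently, a subnormal subgroup that coincides with its own normal closure must be the whole group.

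Now the induction. If $K = G$ then $\langle H, K\rangle = G$ and there is nothing to prove, so assume $K < G$ and set $N := K^G \lhd G$, which is proper by the observation. Let $K^H := \langle K^h : h \in H\rangle$. Since $N \lhd G$ we have $K^h \le N$ for all $h$, so $K^H \le N$; and since both $H$ and $K$ normalize $K^H$ (the former by construction, the latter because $K \le K^H$), the group $J := \langle H, K\rangle$ normalizes $K^H$, giving $K^H \lhd J$ and $J = H\,K^H$. Each $K^h$ is subnormal in $N$ (as $K \trianglelefteq\trianglelefteq N$ and conjugation by $h$ fixes $N$), so $K^H$ is a join of finitely many subnormal subgroups of $N$; since $|N| < |G|$, the induction hypothesis, iterated over the finitely many conjugates, gives $K^H \trianglelefteq\trianglelefteq N$. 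Put $W := HN$, so that $J = H\,K^H \le W$ and $N \lhd W$.

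Finally I split on $W$. If $W < G$, then inside $W$ we have $H \trianglelefteq\trianglelefteq W$ and $K^H \trianglelefteq\trianglelefteq W$ (the latter from $K^H \trianglelefteq\trianglelefteq N \lhd W$), so by induction $J = \langle H, K^H\rangle \trianglelefteq\trianglelefteq W$; since $H \trianglelefteq\trianglelefteq G$ forces $W = HN \trianglelefteq\trianglelefteq G$, transitivity yields $J \trianglelefteq\trianglelefteq G$. The delicate case is $W = G$, i.e.\ $G = HN$: here a direct computation gives $(K^H)^N = K^{HN} = K^G = N$, so $K^H$ is a subnormal subgroup of $N$ equal to its own normal closure in $N$; by the key observation $K^H = N$, whence $J = H\,K^H = HN = G$ and again $J \trianglelefteq\trianglelefteq G$. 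I expect this last case to be the \textbf{main obstacle}: it is exactly where a naive induction on $|G|$ stalls, because the product $HN$ need not be proper, and the resolution depends on recognizing that this very failure forces $\langle H, K\rangle$ to exhaust $G$ through the normal-closure observation.
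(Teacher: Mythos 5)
The paper does not actually prove this statement: Theorem~\ref{thm:Wielandt} is quoted as Wielandt's classical result, with a citation to the 1939 original, and is then used as a black box (in the proof of Lemma~\ref{lem:FT}). So there is no in-paper argument to compare yours against; what can be said is that your proof is correct and is essentially the standard finite-group proof of Wielandt's join theorem, by induction on $|G|$ via normal closures. All the main steps are sound: the observation that a proper subnormal subgroup of $G$ has proper normal closure; the containment $K^H\le N:=K^G$ together with $K^H\trianglelefteq\trianglelefteq N$, obtained by applying the inductive hypothesis inside $N$ to the finitely many conjugates $K^h$; and the dichotomy on $W=HN$, where the case $W<G$ follows from induction in $W$ plus transitivity (using that $HN\trianglelefteq\trianglelefteq G$, which your fourth routine fact covers via images and lifts modulo $N$), and the case $W=G$ follows from the computation $(K^H)^N=K^{HN}=K^G=N$. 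One wording slip is worth fixing, though it does not affect the logic: your ``equivalently'' rephrasing of the key observation --- ``a subnormal subgroup that coincides with its own normal closure must be the whole group'' --- is false as literally stated (any proper normal subgroup coincides with its own normal closure). The correct contrapositive, which is what you in fact use when $W=G$, is that a subnormal subgroup whose normal closure is the \emph{whole group} must itself be the whole group; correspondingly, in that final case $K^H$ is not ``equal to its own normal closure in $N$'' --- rather, its normal closure in $N$ is all of $N$, and the (correctly stated) observation then forces $K^H=N$, giving $\langle H,K\rangle=HK^H=HN=G$.
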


A subnormal quasisimple subgroup of a group $G$ is called a \emph{component} of $G$.

\begin{theorem}[{\cite[6.5.2]{kurzweil2004theory}}]\label{thm:component}
 If $K$ is a component of a group $G$ and $H$ is a subnormal subgroup of $G$,   then either $K\le H$ or $[K,H]=1$.
\end{theorem}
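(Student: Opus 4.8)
Recall that a quasisimple group $K$ is perfect, $K=[K,K]$, with $K/Z(K)$ simple. The key structural fact I would isolate first is that such a $K$ has very few normal subgroups: if $N\trianglelefteq K$ then $NZ(K)/Z(K)$ is a normal subgroup of the simple group $K/Z(K)$, so either $N\le Z(K)$ or $NZ(K)=K$; in the latter case perfectness together with $Z(K)$ being central forces $K=[K,K]=[NZ(K),NZ(K)]=[N,N]\le N$, i.e. $N=K$. Thus every normal subgroup of $K$ is either central or all of $K$. The plan is to combine this with a double induction on subnormal defects, using the three subgroups lemma and perfectness to collapse the relevant commutators. (Wielandt's Theorem lets one first replace $G$ by $\langle H,K\rangle$, but this reduction turns out not to be needed below.)

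First I would run an outer induction on the defect $n$ of $H$ in $G$, reducing the whole statement to the case where $H$ is normal. If $H=G$ then $K\le H$ trivially. Otherwise pick a subnormal series $H=H_0\trianglelefteq\cdots\trianglelefteq H_n=G$ and set $M:=H_{n-1}\trianglelefteq G$, so that $H$ is subnormal in $M$ of defect $n-1$. Applying the normal case (treated next) to the component $K$ and the normal subgroup $M$ yields $K\le M$ or $[K,M]=1$. In the first case $K$ is a component of $M$ (subnormality of $K$ in $M$ follows by intersecting a subnormal series for $K$ with $M$) and $H$ is subnormal in $M$ of smaller defect, so the outer induction hypothesis applied inside $M$ finishes. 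In the second case $H\le M$ gives $[K,H]\le[K,M]=1$.

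It remains to prove the normal case: if $K$ is a component of $G$ and $M\trianglelefteq G$ then $K\le M$ or $[K,M]=1$. Since $K\cap M\trianglelefteq K$, the structural fact gives either $K\le M$ (done) or $K\cap M\le Z(K)$; assume the latter. I would now run an inner induction on the defect $d$ of $K$. When $d=1$, i.e. $K\trianglelefteq G$, normality of both $K$ and $M$ gives $[K,M]\le K\cap M\le Z(K)$, and then the three subgroups lemma together with $K=[K,K]$ yields $[[K,K],M]=1$, i.e. $[K,M]=1$. When $d\ge2$, let $L:=K_{d-1}\trianglelefteq G$ be the penultimate term of a subnormal series for $K$; then $K$ is a component of $L$ of defect $d-1$, and $M\cap L\trianglelefteq L$ meets $K$ inside $Z(K)$, so the inner induction gives $[K,M\cap L]=1$. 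Since $K\le L$ and $L,M\trianglelefteq G$ we have $[K,M]\le[L,M]\le L\cap M$, whence $K$ centralises $[K,M]$, i.e. $[[K,M],K]=1$; the three subgroups lemma and perfectness again give $[K,M]=[[K,K],M]=1$.

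The main obstacle is precisely this normal base case with $K\cap M\le Z(K)$: one must upgrade the merely central intersection to full commutation, and the only leverage available is the subnormality of $K$. The device that makes it work is to peel off one layer of $K$'s subnormal series to obtain a normal $L$ with $K\le L$, reduce to $M\cap L$ by the inner induction, and then feed the resulting central commutator into the three subgroups lemma, exploiting $K=[K,K]$ to collapse it to the identity. Verifying the bookkeeping — that $K$ remains a component after passing to $M$ or to $L$, and that $[K,M]\le L\cap M$ — is routine but is where the subnormal-series manipulations must be done with care.
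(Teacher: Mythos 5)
Your proof is correct. There is, however, nothing in the paper to compare it against: the paper does not prove this statement at all, but simply cites it as [6.5.2] of Kurzweil--Stellmacher, \emph{The Theory of Finite Groups}. Your argument is essentially the standard textbook proof of that result: first the observation that every proper normal subgroup of a quasisimple group is central (this is Kurzweil--Stellmacher's own preliminary lemma), then a reduction of the subnormal case to the normal case by induction on the defect of $H$ (peeling off the penultimate term $M=H_{n-1}$ of a subnormal series), and finally an induction on the defect of $K$ in the normal case, where the three subgroups lemma together with perfectness $K=[K,K]$ collapses $[K,M]\le Z(K)$ (respectively $[[K,M],K]=1$) to $[K,M]=1$. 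All the steps check out, including the routine subnormality bookkeeping. The only place you are slightly elliptical is in the $d\ge 2$ step: the inner induction hypothesis applied to the pair $(L,\,M\cap L)$ yields the dichotomy ``$K\le M\cap L$ or $[K,M\cap L]=1$,'' and you must explicitly discard the first alternative. This is immediate --- $K\le M\cap L$ would force $K=K\cap M\le Z(K)$, contradicting the fact that a quasisimple group is nonabelian --- but the sentence should be said. With that addition your write-up is a complete, self-contained proof (it even re-derives the structural fact about quasisimple groups rather than quoting it), which is more than the paper itself provides for this statement.
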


\section{Characteristic functors and minimal witness systems}

%In this section, we introduce the main method we use.

\begin{definition}
    A \emph{characteristic functor} $F:X\mapsto F(X)$ is a map from groups to 
    subgroups such that if $\sigma:X\rightarrow Y$ is an isomorphism, then $\sigma(F(X))=F(Y)$.
\end{definition}

\begin{example}
    Let $\K$ be a class of groups. 
    The operators $\OO_\K$ and $\OO^{\K}$
    are characteristic functors.
\end{example}

\begin{definition}
    Let $L_1$, $L_2$ and $G$ be groups and let $p_1:G\rightarrow L_1$ and $p_2:G\rightarrow L_2$ be surjective homomorphisms
    such that $\ker(p_1)\cong\ker(p_2)$. We say that the triple $(G,p_1,p_2)$
    is a \emph{witness system} for the compatibility of $(L_1,L_2)$.
\end{definition}

Clearly, $L_1$ and $L_2$ are compatible if and only if there exists a witness system $(G,p_1,p_2)$
for the compatibility of $(L_1,L_2)$.

\begin{lemma}\label{lem:minimalwitness}
Let $L_1$ and $L_2$ be groups, let $(G,p_1,p_2)$ be a witness system for the compatibility of $(L_1,L_2)$ such that
    $|G|$ is minimal, let $N_1:=\ker(p_1)$, let $N_2:=\ker(p_2)$ and let $\K$ be a Melnikov formation. The following statements hold.

\begin{enumerate}[\rm (i)]
\item   If $F$ is a characteristic functor such that $F(N_1)=F(N_2)$, then $F(N_1)=1$. \label{char:functor}
%\item   If $\Rad(p_2(N_1))=1$ or  $\Rad(p_1(N_2))=1$, then $\Rad(N_1)=1=\Rad(N_2)$. \label{examp:radical}
\item If $p_2(N_1)\in \K$ or $p_1(N_2)\in \K$, then $N_1,N_2\in \K$. \label{lem:groupclass} 
\item  If $\OO_\K(p_2(N_1))=1$ or $\OO_\K(p_1(N_2))=1$, then $\OO_\K(N_1)=1=\OO_\K(N_2)$. \label{lem:groupclass2} 
\item  $N_1$ and $p_2(N_1)$ have the same set of  composition factors (ignoring multiplicity).  \label{cor:compfactor1}
\item $L_2$ and $G$ have the same set of composition factors (ignoring multiplicity). \label{cor:compfactor2}
\end{enumerate}
\end{lemma}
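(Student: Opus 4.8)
The plan is to prove each part using the minimality of $|G|$ as the central lever, exploiting the fact that a smaller witness system would contradict minimality. The key idea is that whenever we find a nontrivial characteristic subgroup shared by $N_1$ and $N_2$ (under the isomorphism realizing $\ker(p_1)\cong\ker(p_2)$), we can quotient it out to produce a strictly smaller witness system, forcing that subgroup to be trivial.

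First, for part (\ref{char:functor}), I would fix an isomorphism $\varphi:N_1\to N_2$ witnessing the compatibility. Since $F$ is a characteristic functor, $\varphi(F(N_1))=F(N_2)$. Suppose $F(N_1)\neq 1$. The subgroups $F(N_1)$ and $F(N_2)$ are characteristic in $N_1$ and $N_2$ respectively, hence normal in $G$ (as $N_i\trianglelefteq G$). The plan is to form $\bar G:=G/M$ for a suitable normal subgroup $M$, with induced maps $\bar p_i$ onto $L_i$, so that the new kernels are $N_i/M$ and these remain isomorphic. The natural candidate is to push $F(N_1)$ and $F(N_2)$ into the picture: set things up so that the quotients $N_1/F(N_1)$ and $N_2/F(N_2)$ are isomorphic (which they are, via $\varphi$), and realize them as kernels of maps from a smaller group. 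Concretely, I expect to take $\bar G=G/F(N_1)$; then $\bar p_1$ has kernel $N_1/F(N_1)$, and I must check that $L_2$ is still a quotient of $\bar G$ with kernel isomorphic to $N_1/F(N_1)$. This requires relating $F(N_1)$ and $F(N_2)$ inside $G$, and here the subtlety is that $F(N_1)$ and $F(N_2)$ need not coincide as subgroups of $G$ even though they are abstractly isomorphic. The resolution should be to quotient by whichever is needed and use $\varphi$ to transport the isomorphism of kernels; the minimality of $|G|$ then yields $F(N_1)=1$.

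Parts (\ref{lem:groupclass}), (\ref{lem:groupclass2}), and (\ref{cor:compfactor1}) I would deduce from (\ref{char:functor}) by choosing $F$ cleverly. For (\ref{lem:groupclass}), observe that $N_2=p_2(N_1)\cdot(N_1\cap N_2)$-type relations are not quite right; instead the natural functor to use is $F=\OO^{\K}$. Assuming $p_2(N_1)\in\K$, I want to show $\OO^{\K}(N_1)=\OO^{\K}(N_2)$ forces both to be trivial via (\ref{char:functor}), giving $N_1,N_2\in\K$ by Lemma~\ref{lem:OK}(\ref{lem:OK:iii}). The computation of $p_2(N_1)$ in terms of $N_1N_2/N_2\cong N_1/(N_1\cap N_2)$ is the workhorse: since $p_2$ has kernel $N_2$, we have $p_2(N_1)\cong N_1/(N_1\cap N_2)$, and membership in the Melnikov formation $\K$ propagates via Lemma~\ref{lem:OK}. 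For (\ref{lem:groupclass2}) I would take $F=\OO_\K$ and argue analogously, using Lemma~\ref{lem:OK}(\ref{lem:OK:v}) and (\ref{lem:OK:vii}) to relate the $\K$-radicals of $N_1$, $N_2$, and $p_2(N_1)$. Part (\ref{cor:compfactor1}) should follow because $N_1$ and $p_2(N_1)\cong N_1/(N_1\cap N_2)$ share composition factors up to those of $N_1\cap N_2$, and (\ref{char:functor}) applied to a functor detecting a missing composition factor forces that factor to be absent.

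Finally, for (\ref{cor:compfactor2}), I would note that $G$ is an extension of $L_2\cong G/N_2$ by $N_2$, so the composition factors of $G$ are those of $L_2$ together with those of $N_2$. By (\ref{cor:compfactor1}) applied symmetrically (swapping the roles of $N_1$ and $N_2$), the composition factors of $N_2$ coincide with those of $p_1(N_2)\cong N_2/(N_1\cap N_2)$, which is a subgroup-quotient already visible inside $L_1\cong G/N_1$; combined with the fact that $L_1$ and $L_2$ have the same composition factors as quotients of a common group, I expect the composition factors of $N_2$ to be absorbed into those of $L_2$, yielding equality of composition factor sets for $L_2$ and $G$. The main obstacle throughout is part (\ref{char:functor}): the delicate point is constructing the smaller witness system correctly, since $F(N_1)$ and $F(N_2)$ sit inside $G$ as possibly distinct subgroups, and I must ensure that after quotienting, the two induced kernels remain genuinely isomorphic so that minimality can be invoked.
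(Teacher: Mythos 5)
Your overall strategy (quotient by a characteristic subgroup and invoke minimality for (\ref{char:functor}); take $F=\OO^\K$ and $F=\OO_\K$ for (\ref{lem:groupclass}) and (\ref{lem:groupclass2}); generate a Melnikov formation from composition factors for (\ref{cor:compfactor1})) is the paper's strategy, but there is a foundational gap that comes from misreading part (\ref{char:functor}). Its hypothesis is that $F(N_1)=F(N_2)$ \emph{as subgroups of $G$} (equality, not abstract isomorphism), and that equality is exactly what makes the construction work: $G/F(N_1)=G/F(N_2)$ is literally one group, both induced maps $q_1,q_2$ are defined on it, and their kernels $N_1/F(N_1)$ and $N_2/F(N_2)$ are isomorphic because any isomorphism $N_1\to N_2$ carries $F(N_1)$ to $F(N_2)$. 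You instead treat the hypothesis as mere isomorphism, flag the possible non-coincidence of $F(N_1)$ and $F(N_2)$ inside $G$ as a subtlety, and offer a vague ``quotient by whichever is needed'' resolution. No such resolution exists: if $F(N_1)\not\le N_2$, then $p_2$ does not even factor through $G/F(N_1)$, and with an isomorphism-only hypothesis the statement is outright false --- take $F$ to be the identity functor; since $N_1\cong N_2$ always holds, (\ref{char:functor}) would force $N_1=1$ in every minimal witness system, i.e.\ compatible groups would be isomorphic, contradicting Example~\ref{examp:A6Z7}, where $\Alt_6^7$ and $\Alt_6\wr\Alt_6$ are compatible but not isomorphic.

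This misreading propagates into (\ref{lem:groupclass}) and (\ref{lem:groupclass2}), whose entire content is establishing the \emph{subgroup equality} $\OO^\K(N_1)=\OO^\K(N_2)$ (resp.\ $\OO_\K(N_1)=\OO_\K(N_2)$) before (\ref{char:functor}) can be applied; your sketch omits this work entirely. The paper's chain is: $p_2(\OO^\K(N_1))=\OO^\K(p_2(N_1))=1$ by Lemma~\ref{lem:OK}(\ref{lem:OK:vi}), hence $\OO^\K(N_1)\le N_1\cap N_2$; then $\OO^\K(N_1)=\OO^\K(N_1\cap N_2)\le \OO^\K(N_2)$ by Lemma~\ref{lem:OK}(\ref{lem:OK:iv}) together with closure of $\K$ under normal subgroups; finally $|\OO^\K(N_1)|=|\OO^\K(N_2)|$ (from $N_1\cong N_2$) upgrades the inclusion to equality. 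Notice that your sketch never uses the hypothesis $\OO^\K(p_2(N_1))=1$ in any concrete step --- under your reading of (\ref{char:functor}) it would never be needed, which should have been a red flag. Finally, your argument for (\ref{cor:compfactor2}) goes the wrong way: applying (\ref{cor:compfactor1}) with the roles swapped places the composition factors of $N_2$ among those of $L_1$, not $L_2$, and the patch you invoke (``$L_1$ and $L_2$ have the same composition factors as quotients of a common group'') is not a valid general fact; it is essentially the conclusion being proved. The correct route is (\ref{cor:compfactor1}) unswapped plus $N_1\cong N_2$: the composition factors of $N_2$ equal those of $N_1$, which equal those of $p_2(N_1)\le L_2$, so the extension of $L_2\cong G/N_2$ by $N_2$ gives the claim (the paper phrases this via $N_1N_2\in\CC$ and $L_2/p_2(N_1)\cong G/N_1N_2$, which amounts to the same thing).
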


\begin{proof}\mbox{}
    \begin{enumerate}[\rm (i)]
   \item  Let $q_1:G/F(N_1)\rightarrow L_1$ and $q_2:G/F(N_2)\rightarrow L_2$
    be the homomorphisms induced by $p_1$ and $p_2$ respectively.
    We claim that $(G/F(N_1),q_1,q_2)$ is a witness system for the compatibility of $(L_1,L_2)$. 
    Since $G/F(N_1)=G/F(N_2)$, $(G/F(N_1),q_1,q_2)$ is well-defined. 
    Because $p_1$ and $p_2$ are surjective, we have that $q_1$ and $q_2$ are also surjective.
    Note that $\ker(q_1)=N_1/F(N_1)$ and $\ker(q_2)=N_2/F(N_2)$.
    Because $F$ is a characteristic functor, we have $\ker(q_1)\cong \ker(q_2)$.
    Hence \[(G/F(N_1),q_1,q_2)\] is a witness system for the compatibility of $(L_1,L_2)$.
    By the minimality of $|G|$, $F(N_1)=1$.
    
   % \item     As we mentioned in Example \ref{examp:rad}, $\Rad=\OO_\K$ where $\K$ is the class of solvable groups.  Then Lemma \ref{lem:groupclass} {\rm (ii)} concludes the proof.
    
    \item By the definition of $\OO^{\K}$, 
    the statement is equivalent to the following:  
    \begin{center}
        if $\OO^{\K}(p_2(N_1))=1$ or $\OO^{\K}(p_1(N_2))=1$, then $\OO^{\K}(N_1)=1=\OO^{\K}(N_2)$.
    \end{center}
    Since $N_1\cong N_2$, by symmetry it is sufficient to show that $\OO^\K(N_1)=1$ when
    $\OO^\K(p_2(N_1))=1$. Assume that $\OO^\K(p_2(N_1))=1$. By Lemma \ref{lem:OK} (\ref{lem:OK:vi}), we have $p_2(\OO^\K(N_1))=1$, that is, $\OO^\K(N_1)\leq N_2$. 
    This implies  that $\OO^\K(N_1)\le N_1\cap N_2$.
    By Lemma \ref{lem:OK} (\ref{lem:OK:iv}), $\OO^\K(N_1)=\OO^\K(N_1\cap N_2)$.
    Note that  
    \[(N_1\cap N_2)/(N_1\cap \OO^\K(N_2))\cong (N_1\cap N_2)\OO^\K(N_2)/\OO^\K(N_2)\trianglelefteq N_2/\OO^\K(N_2)\in\K.\]
    Hence $(N_1\cap N_2)/(N_1\cap \OO^\K(N_2))\in \K$
    which implies that $\OO^\K(N_1\cap N_2)\le N_1\cap \OO^\K(N_2)$ and thus $\OO^\K(N_1\cap N_2)\le \OO^\K(N_2)$.
    Since $|\OO^\K(N_2)|=|\OO^\K(N_1)|=|\OO^\K(N_1\cap N_2)|$,
    we have $\OO^\K(N_2)=\OO^\K(N_1\cap N_2)=\OO^\K(N_1)$.
    It follows by (\ref{char:functor}) that $\OO^\K(N_1)=1$.

\item     Since $N_1\cong N_2$, by symmetry it is sufficient to show that $\OO_\K(N_1)=1$ when
    $\OO_\K(p_2(N_1))=1$.    Assume that $\OO_\K(p_2(N_1))=1$.
    By Lemma \ref{lem:OK} (\ref{lem:OK:vii}), we have $p_2(\OO_\K(N_1))\le \OO_\K(p_2(N_1))=1$.
    Therefore, $\OO_\K(N_1)\le N_2$ which implies that
    $\OO_\K(N_1)\le N_1\cap N_2$.
    By Lemma \ref{lem:OK} (\ref{lem:OK:v}), we have that
    $\OO_\K(N_1)=\OO_\K(N_1\cap N_2)$ and
    $\OO_\K(N_1\cap N_2)=\OO_\K(N_1)\cap N_2\le \OO_\K(N_2)$.
    Since $|\OO_\K(N_2)|=|\OO_\K(N_1)|=|\OO_\K(N_1\cap N_2)|$,
    we have $\OO_\K(N_2)=\OO_\K(N_1\cap N_2)=\OO_\K(N_1)$.
        It follows by (\ref{char:functor}) that $\OO_\K(N_1)=1$.

\item    Let $\Delta$ be the set of composition factors of $p_2(N_1)$ and let $\CC$ be the Melnikov formation consisting of groups whose composition factors are in $\Delta$.  It is clear that $p_2(N_1)\in\CC$. Applying (\ref{lem:groupclass}) with $\K=\CC$ yields that $N_1\in\CC$ and the statement easily follows.

\item   Let $\CC$ be as in the previous paragraph.  As $N_1\cong N_2$ and $N_1\in\CC$, by Lemma \ref{lem:OK} (\ref{lem:OK:i}), we have
    that $N_1N_2\in\CC$.
    Hence $N_1N_2$ and $p_2(N_1)$ have the same set of composition factors.
    Also note that $L_2/p_2(N_1)\cong G/N_1N_2$, therefore, $L_2$ and $G$ have the same set of composition factors.
\end{enumerate}
\end{proof}

%We know that compatible groups have the same composition factors. The following corollary shows that a witness of minimal order also shares this set of composition factors.

\section{Sims' Lemma}

\begin{definition}
Let $L_1$ and $L_2$ be groups and
let $M_1$ and $M_2$ be normal subgroups of $L_1$ and $L_2$ respectively.
If 
\begin{itemize}
    \item [\rm (i)] $M_1$ and $M_2$ are compatible, and
    \item [\rm (ii)] $1\ne L_1/M_1\cong L_2/M_2$,
\end{itemize}
then we say that $(M_1,M_2)$ is a \emph{Sims pair} for $(L_1,L_2)$ or
that $(L_1,L_2)$ has a \emph{Sims pair} $(M_1,M_2)$.
\end{definition}

%Now we state Sims' Lemma (see \cite[Lemma 4.1]{quirin1971Primitive}), which plays an important role in our study. For completeness, we include the proof.

\begin{lemma}[Sims' Lemma]\label{lem:Sims}
    Let $L_1$ and $L_2$ be nontrivial compatible groups. If
    $(G,p_1,p_2)$ is a witness system for the compatibility of $(L_1,L_2)$
    such that $|G|$ is minimal, then
    \[(p_1(\ker(p_2)),p_2(\ker(p_1)))\]
    is a Sims pair for $(L_1,L_2)$.
\end{lemma}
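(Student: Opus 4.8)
The plan is to verify the two defining conditions of a Sims pair directly for $M_1 := p_1(\ker(p_2))$ and $M_2 := p_2(\ker(p_1))$. Write $N_1 := \ker(p_1)$ and $N_2 := \ker(p_2)$, and identify $L_1$ with $G/N_1$ and $L_2$ with $G/N_2$; under these identifications $M_1 = N_1N_2/N_1$ and $M_2 = N_1N_2/N_2$. Since $N_1, N_2 \trianglelefteq G$ and $p_1, p_2$ are surjective, both $M_1$ and $M_2$ are normal in $L_1$ and $L_2$ respectively, so the setup of the definition is satisfied. Because $(G,p_1,p_2)$ is a witness system I also fix an isomorphism $\phi\colon N_1 \to N_2$, which will be the crucial ingredient.

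First I would establish the isomorphism of quotients required by condition (ii). By the third isomorphism theorem, $L_1/M_1 = (G/N_1)/(N_1N_2/N_1) \cong G/(N_1N_2)$, and symmetrically $L_2/M_2 \cong G/(N_1N_2)$, so $L_1/M_1 \cong L_2/M_2$ with no appeal to minimality. Next I would prove condition (i), the compatibility of $M_1$ and $M_2$: the natural candidate is the witness system $H := N_1N_2$ together with the quotient maps $\rho_1\colon H \to H/N_1 = M_1$ and $\rho_2\colon H \to H/N_2 = M_2$. These are surjective with kernels $N_1$ and $N_2$, which are isomorphic; hence $(H,\rho_1,\rho_2)$ witnesses the compatibility of $(M_1,M_2)$.

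The main obstacle is the nontriviality clause $1 \ne L_1/M_1$, equivalently $N_1N_2 \ne G$; this is the only place where minimality of $|G|$ enters. I would argue by contradiction: assume $N_1N_2 = G$ and set $D := N_1 \cap N_2 \trianglelefteq G$. The second isomorphism theorem then gives $N_1/D \cong G/N_2 \cong L_2$ and $N_2/D \cong G/N_1 \cong L_1$. The key trick is that $\phi^{-1}(D)$, being the image of $D$ under the isomorphism $\phi^{-1}$, is a normal subgroup of $N_1$ with $\phi^{-1}(D) \cong D$, and that $\phi$ induces an isomorphism $N_1/\phi^{-1}(D) \cong N_2/D \cong L_1$. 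Consequently the quotient maps $q_1\colon N_1 \to N_1/\phi^{-1}(D) \cong L_1$ and $q_2\colon N_1 \to N_1/D \cong L_2$ have isomorphic kernels $\phi^{-1}(D) \cong D$, so $(N_1,q_1,q_2)$ is itself a witness system for $(L_1,L_2)$.

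Since $L_1$ is nontrivial we have $N_1 \ne G$, and therefore $|N_1| < |G|$, contradicting the minimality of $|G|$. Hence $N_1N_2 \ne G$, which makes $L_1/M_1 \cong G/(N_1N_2)$ nontrivial and completes condition (ii). I expect this final reduction to be the crux: recognising that transporting $D$ through the isomorphism $\phi$ manufactures a strictly smaller witness system is the decisive step, whereas the computation of $M_1$, $M_2$ and the isomorphism of quotients are routine applications of the isomorphism theorems.
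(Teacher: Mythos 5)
Your proposal is correct and follows essentially the same route as the paper: the same candidate pair $(p_1(N_2),p_2(N_1))$, the same witness system $N_1N_2$ with the two quotient maps for compatibility, the same third-isomorphism-theorem computation of the quotients, and the same contradiction for nontriviality, obtained by transporting $N_1\cap N_2$ through the isomorphism between the kernels to manufacture a witness system on a proper kernel (you build it on $N_1$, the paper on $N_2$, which is immaterial). No gaps.
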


\begin{proof}
    Let $N_1:=\ker(p_1)$, let $N_2:=\ker(p_2)$,
    let $M_1:=p_1(N_2)$ and let $M_2:=p_2(N_1)$. Clearly, $M_1$ and $M_2$ are normal subgroups of $L_1$ and $L_2$ respectively.
    Note that $M_1=p_1(N_1N_2)$ and $M_2=p_2(N_1N_2)$.
    Then $(N_1N_2,p_1|_{N_1N_2,M_1},p_2|_{N_1N_2,M_2})$
    is a witness system for the compatibility of $(M_1,M_2)$ which implies that
    $M_1$ and $M_2$ are compatible. 
    Moreover,
    \[L_1/M_1\cong G/(N_1N_2)\cong L_2/M_2.\]
    It remains only to show that $L_1/M_1\ne 1$.
    Assume $L_1/M_1=1$.
    Let $\sigma:N_2\rightarrow N_1$ be an isomorphism from $N_2$ to $N_1$.
    Let $q_1:=p_1|_{N_2,L_1}$ and let $q_2:=p_2|_{N_1,L_2}\circ \sigma$.
    We claim that $(N_2,q_1,q_2)$ is a witness
    system for the compatibility of $(L_1,L_2)$.
    Note that $L_1/M_1=1$ implies that 
    $p_1(N_2)=L_1$ and $p_2(N_1)=L_2$. It follows that  $q_1(N_2)=L_1$ and $q_2(N_2)=L_2$ that is, $q_1$ and $q_2$ are surjective.
    Also note that $\ker(q_1)=N_1\cap N_2$
    and $\ker(q_2)=\sigma^{-1}(N_1\cap N_2)$ hence $\ker(q_1)\cong\ker(q_2)$.
    Therefore $(N_2,q_1,q_2)$ is a witness system for the compatibility of $(L_1,L_2)$.
    However, this contradicts the minimality of $|G|$.
    We thereby derive $L_1/M_1\ne 1$, which concludes the proof.
\end{proof}

\begin{definition}\label{def:compseries}
    Let $L_1$ and $L_2$ be groups and
    let 
    \[S_1:1=S_{0;1}\trianglelefteq S_{1;1}\trianglelefteq\cdots\trianglelefteq S_{\ell;1}=L_1\]
    and
    \[S_2:1=S_{0;2}\trianglelefteq S_{1;2}\trianglelefteq\cdots\trianglelefteq S_{\ell;2}=L_2\]
    be subnormal series of length $\ell$ for $L_1$ and $L_2$ respectively.
    If $S_{i}/S_{i-1}\cong T_{i}/T_{i-1}$ for every $i\in\{1,\ldots, \ell\}$,
    then we say that $L_1$ and $L_2$ have \emph{compatible 
    subnormal series} $S_1$ and $S_2$.
    Analogously, we define \emph{compatible normal series}.
\end{definition}

Sims' Lemma has the following corollary.
\begin{corollary}\label{cor:compsubnormal}
    If $(L_1,L_2)$ has a Sims pair, then $L_1$ and $L_2$ have compatible subnormal
    series. In particular, if $L_1$ and $L_2$ are compatible, then they
    have compatible subnormal series.
\end{corollary}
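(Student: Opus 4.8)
The plan is to prove the first statement by strong induction on $|L_1|$, and then deduce the ``in particular'' clause directly from Sims' Lemma. First I would record the elementary fact that compatible groups have equal order: if $(G,p_1,p_2)$ witnesses the compatibility of $L_1$ and $L_2$ with $N_i=\ker(p_i)$, then $|L_i|=|G|/|N_i|$ and $|N_1|=|N_2|$, so $|L_1|=|L_2|$. In particular a group compatible with the trivial group is itself trivial, and more to the point, if $M_1$ and $M_2$ are compatible then $M_1=1$ if and only if $M_2=1$.

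Now suppose $(L_1,L_2)$ has a Sims pair $(M_1,M_2)$, so that $M_1\trianglelefteq L_1$, $M_2\trianglelefteq L_2$, the groups $M_1$ and $M_2$ are compatible, and $1\ne L_1/M_1\cong L_2/M_2$. In the base case $M_1=M_2=1$ we get $L_1\cong L_1/M_1\cong L_2/M_2\cong L_2$, so the length-one series $1\trianglelefteq L_1$ and $1\trianglelefteq L_2$ are compatible subnormal series and we are done. Otherwise $M_1$ and $M_2$ are nontrivial and compatible, so by Sims' Lemma (Lemma~\ref{lem:Sims}), applied to a minimal witness system for $(M_1,M_2)$, the pair $(M_1,M_2)$ itself has a Sims pair. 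Since $L_1/M_1\ne 1$ forces $|M_1|<|L_1|$, the inductive hypothesis applies to $(M_1,M_2)$ and yields compatible subnormal series
\[1=S_0\trianglelefteq\cdots\trianglelefteq S_k=M_1,\qquad 1=T_0\trianglelefteq\cdots\trianglelefteq T_k=M_2\]
with $S_i/S_{i-1}\cong T_i/T_{i-1}$ for all $i$. Appending $S_{k+1}:=L_1$ and $T_{k+1}:=L_2$, which is legitimate because $M_1\trianglelefteq L_1$ and $M_2\trianglelefteq L_2$, extends these to subnormal series of $L_1$ and $L_2$ whose new top factor is $L_1/M_1\cong L_2/M_2$; this is the desired pair of compatible subnormal series. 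Finally, for the ``in particular'' clause, if $L_1$ and $L_2$ are compatible and trivial the claim is immediate, and if they are compatible and nontrivial then Sims' Lemma provides a Sims pair for $(L_1,L_2)$, so the first part applies.

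I expect the only genuine content to be the recognition that Sims' Lemma is exactly what drives the induction: it upgrades the bare compatibility of the smaller groups $M_1,M_2$ into the stronger hypothesis ``$(M_1,M_2)$ has a Sims pair'' that is needed to recurse, while the nontriviality of the top quotient $L_1/M_1$ guarantees $|M_1|<|L_1|$ so that the induction terminates at the $M_1=M_2=1$ case. The remaining steps, namely matching up the factors and checking that appending the top term preserves subnormality and the isomorphism of successive quotients, are routine bookkeeping against Definition~\ref{def:compseries}.
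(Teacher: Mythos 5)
Your proof is correct and follows essentially the same route as the paper: induction on $|L_1|$ with the base case $M_1=M_2=1$ giving $L_1\cong L_2$, the inductive step using Sims' Lemma to produce a Sims pair for $(M_1,M_2)$ and then appending $L_1$, $L_2$ to the resulting series, and the ``in particular'' clause deduced from Sims' Lemma. Your two small additions (noting that compatibility forces $|M_1|=|M_2|$, so the case split is exhaustive, and handling the trivial case in the final clause, where Sims' Lemma does not apply) are points the paper glosses over, and they tighten the argument without changing it.
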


\begin{proof}
We prove the first statement by  induction on $|L_1|$.
    Let $(M_1,M_2)$ be a Sims pair for $(L_1,L_2)$.
    If $M_1=M_2=1$, then $L_1\cong L_2$ which implies
    that $L_1$ and $L_2$ have compatible subnormal series.
    If $M_1$ and $M_2$ are nontrivial, then
    by Lemma \ref{lem:Sims}, $(M_1,M_2)$ also has a Sims pair.
    Then the induction assumption implies that $M_1$ and $M_2$
    have compatible subnormal series 
    \[1\trianglelefteq S_{1;1}\trianglelefteq \cdots\trianglelefteq S_{n;1}=M_1\]
    and
    \[1\trianglelefteq S_{1;2}\trianglelefteq \cdots\trianglelefteq S_{n;2}=M_2.\]
    Then
    \[1\trianglelefteq S_{1;1}\trianglelefteq \cdots\trianglelefteq S_{n;1}\trianglelefteq L_1\]
    and\[1\trianglelefteq S_{1;2}\trianglelefteq \cdots\trianglelefteq S_{n;2}\trianglelefteq L_2\]
    are compatible subnormal series for $L_1$ and $L_2$. This completes the proof of the first statement.
        Now we let $L_1$ and $L_2$ be compatible. By Lemma \ref{lem:Sims},
    $(L_1,L_2)$ has a Sims pair, and by the first statement, $L_1$ and $L_2$ have compatible subnormal series.
\end{proof}

%\begin{example}\label{examp:SL25S5}
%  It is easy to check that $\SL(2,5)$ and $\SSS_5$ do not have any compatible subnormal series hence by Corollary \ref{cor:compsubnormal}, $\SL(2,5)$ and $\SSS_5$ are not compatible. 
%\end{example}

\section{Main results}\label{sec:main}

%In this section, we present our main results and the proof of Theorem \ref{thm:main}.
%First, we define a characteristic functor.

\begin{definition}
    Let $T$ be a group and let $\SSS_T$ be the characteristic functor defined by
    \[\SSS_T(G):=\langle N\mid N\mbox{ is a subnormal subgroup of $G$ isomorphic to } T\rangle.\]
\end{definition}

\begin{lemma}\label{lem:FT}
    Let $T$ be a nonabelian simple group, let $G$ be a group and let $N$ be a normal subgroup 
    of $G$.  The following statements hold.
    \begin{enumerate}[\rm (i)]
        \item $\SSS_T(G)\cong T^i$ for some non-negative integer $i$.\label{lem:FT:i}
        \item $\SSS_T(G)\cap N=\SSS_T(N)$.\label{lem:FT:ii}
        \item There exists a normal subgroup $K$ of $G$ such that $K\le \SSS_T(G)$,
        $\SSS_T(G)N=K\times N$ and $K\cong T^j$ for some non-negative integer $j$.\label{lem:FT:iii}
    \end{enumerate}
\end{lemma}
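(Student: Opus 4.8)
The plan rests on one key observation: since $T$ is nonabelian simple, it is quasisimple (as $T=T'$ and $Z(T)=1$), so every subnormal subgroup of $G$ isomorphic to $T$ is a \emph{component} of $G$, and Theorem~\ref{thm:component} becomes the central tool. For (\ref{lem:FT:i}), let $K_1,\dots,K_m$ be the subnormal subgroups of $G$ isomorphic to $T$, so that $\SSS_T(G)=\langle K_1,\dots,K_m\rangle$. Applying Theorem~\ref{thm:component} to the component $K_a$ and the subnormal subgroup $K_b$ gives, for $a\ne b$, either $K_a\le K_b$ or $[K_a,K_b]=1$; since $|K_a|=|K_b|=|T|$, the first alternative forces $K_a=K_b$, so distinct $K_a$ commute elementwise. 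Consequently each $K_a$ is normalised by every generator and hence $K_a\trianglelefteq\SSS_T(G)$. I would then invoke the standard fact that a group generated by pairwise-commuting normal simple subgroups is the internal direct product of a subset of them: choosing $S$ maximal subject to $\prod_{a\in S}K_a$ being direct, any $K_b$ not contained in that product would meet it in a proper, hence trivial, normal subgroup of the simple group $K_b$ and could be adjoined, contradicting maximality. Thus $\SSS_T(G)=\prod_{a\in S}K_a\cong T^{|S|}$.

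For (\ref{lem:FT:ii}), the inclusion $\SSS_T(N)\le\SSS_T(G)\cap N$ is immediate: a subnormal subgroup of $N$ isomorphic to $T$ is subnormal in $G$, because subnormality is transitive and $N\trianglelefteq G$, so it is one of the generators of $\SSS_T(G)$ while also lying in $N$. For the reverse inclusion, note that $\SSS_T(G)\cap N$ is a normal subgroup of $\SSS_T(G)\cong T^i$, and every normal subgroup of a direct product of nonabelian simple groups is the product of a subset of the factors; hence $\SSS_T(G)\cap N=\prod_{a\in A}K_a$ for some index set $A$. Each such $K_a$ lies in $N$ and, being a component of $G$ contained in the normal subgroup $N$, is subnormal in $N$, so $K_a\le\SSS_T(N)$. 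Therefore $\SSS_T(G)\cap N\le\SSS_T(N)$, giving equality.

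For (\ref{lem:FT:iii}), I would set $K:=\SSS_T(G)\cap\C_G(N)$, which is normal in $G$ since $\SSS_T(G)$ is characteristic and $\C_G(N)$ is normal. Applying Theorem~\ref{thm:component} to each component $K_a$ and the (sub)normal subgroup $N$ shows $K_a\le N$ or $[K_a,N]=1$, and these alternatives are mutually exclusive because $K_a$ is nonabelian; this partitions the factors of $\SSS_T(G)$ into those inside $N$, whose product is $\SSS_T(G)\cap N=\SSS_T(N)$ by (\ref{lem:FT:ii}), and those centralising $N$, whose product is exactly $K$. Then $K\cong T^j$ by (\ref{lem:FT:i}), and $[K,N]=1$ by construction; moreover $K\cap N\le\SSS_T(N)\cap\C_G(N)\le Z(\SSS_T(N))=1$, so $KN=K\times N$. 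Finally, since the factors inside $N$ already lie in $N$, we obtain $\SSS_T(G)\le KN$ and hence $\SSS_T(G)N=KN=K\times N$. The main obstacle throughout is the structural input that a product of pairwise-commuting simple subgroups is an internal direct product of a subset, together with the identification of the normal subgroups of $T^i$ as subproducts; once these are secured, Theorem~\ref{thm:component} organises the components cleanly and all three statements follow. The point requiring the most care is ensuring, in (\ref{lem:FT:iii}), that the containment/centralising dichotomy is a genuine partition, which is precisely where the nonabelianness of $T$ is used.
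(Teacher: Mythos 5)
Your proof is correct, and while it rests on the same two pillars as the paper's proof --- Theorem~\ref{thm:component} and the structure of normal subgroups of direct products of nonabelian simple groups --- it departs from the paper in two genuine ways. First, in part (i) the paper builds a maximal direct product $H$ of subnormal copies of $T$, invokes Wielandt's Theorem (Theorem~\ref{thm:Wielandt}) to conclude that $H$ is subnormal, and then applies Theorem~\ref{thm:component} once to a copy $S\not\le H$; you instead apply Theorem~\ref{thm:component} pairwise to the generating copies themselves (each already subnormal by definition), deduce that distinct copies commute elementwise, and then run the maximality argument inside $\SSS_T(G)$. This dispenses with Wielandt's Theorem entirely, at the cost of invoking the internal-direct-product lemma for pairwise-commuting simple normal subgroups --- a fair trade, and arguably more elementary. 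Second, in part (iii) the paper takes $K$ to be the \emph{unique} normal complement of $\SSS_T(G)\cap N$ in $\SSS_T(G)$, with normality in $G$ following from uniqueness of that complement under conjugation; you define $K:=\SSS_T(G)\cap\C_G(N)$, so normality in $G$ is immediate, but you must then use Theorem~\ref{thm:component} a second time (components versus $N$) to establish the containment/centralising dichotomy and identify $K$ as a complement of $\SSS_T(G)\cap N$. The two constructions yield the same subgroup; the paper's is shorter once uniqueness of the complement is granted, while yours makes the normality of $K$ transparent and isolates exactly where the nonabelianness of $T$ enters (the exclusivity of the dichotomy). Part (ii) is essentially identical in the two treatments.
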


\begin{proof}\mbox{}
\begin{itemize}
        \item [\rm (i)] Let $\mathcal{S}:=\{N\le G\mid N\mbox{ is a subnormal subgroup of $G$ isomorphic to } T\}$.
    Because $G$ is finite, there exists $\mathcal{H}=\{T_1,\ldots,T_n\}\subseteq\mathcal{S}$
    which is maximal with respect to the property
    that the subgroup $H$ generated by $\mathcal{H}$
    is a direct product $T_1\times\cdots\times T_n$.  By Theorem~\ref{thm:Wielandt}, $H$ is subnormal in $G$.
    Assume $S\in\mathcal{S}$ but $S\not\leq H$. Note that $S$ is  a component of $G$ and  Theorem~\ref{thm:component} implies that $[S,H]=1$ which in turn implies $S\cap H=1$. It follows that $\langle H,S\rangle=H\times S$  which contradicts the maximality of $\mathcal{S}$. This implies that $\langle S \mid S\in\mathcal{S}\rangle\leq H$ and thus $H=\SSS_T(G)$.

        \item [\rm (ii)] Subnormal subgroups of $N$ are also subnormal subgroups of $G$ hence  $\SSS_T(N)\le \SSS_T(G)$ which implies $\SSS_T(N)\le \SSS_T(G)\cap N$.
    On the other hand, by {\rm (i)}, $\SSS_T(G)\cap N\cong T^k$ for some integer $k$.
    Hence $\SSS_T(G)\cap N$ is generated by some normal subgroups of $\SSS_T(G)\cap N$,
    which are also subnormal subgroups of $N$ isomorphic to $T$.
    Hence $\SSS_T(G)\cap N\le \SSS_T(N)$. This implies that $\SSS_T(N)=\SSS_T(G)\cap N$.
       \item [\rm (iii)] By {\rm (i)}, we have $\SSS_T(G)\cong T^i$ with $T$ nonabelian simple. This implies that every normal subgroup of $\SSS_T(G)$ has a unique normal complement which must be isomorphic to $T^j$ for some integer $j$. Let $K$ be the unique normal complement of $\SSS_T(G)\cap N$   in $\SSS_T(G)$.
    Note that $\SSS_T(G)\cap N$ and $\SSS_T(G)$ are both normal in $G$, 
    hence $K$ is also normal in $G$. 
    Since \[K\cap N=(K\cap \SSS_T(G))\cap N=K\cap (\SSS_T(G)\cap N)=1,\]
    we have \[\SSS_T(G)N=(K(\SSS_T(G)\cap N))N=K((\SSS_T(G)\cap N)N)=KN=K\times N.\]
    \end{itemize}
\end{proof}

\begin{lemma}\label{lem:nonabelian-subnormal}
    Let $L_1$ and $L_2$ be groups and  let $(G,p_1,p_2)$ be a witness system for the compatibility of $(L_1,L_2)$
        such that $|G|$ is minimal. If $\ker(p_1)$ or $\ker(p_2)$ has a nonabelian simple subnormal subgroup $T$, 
        then there exist
        $M_1\trianglelefteq L_1$ and $M_2\trianglelefteq L_2$
        such that 
        $M_1\cong M_2\cong T^k$ for some  $k\geq 1$
        and $L_1/M_1$ and $L_2/M_2$
        are compatible.
\end{lemma}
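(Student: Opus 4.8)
The plan is to apply the characteristic functor $\SSS_T$ to $G$ and to set $M_1:=p_1(\SSS_T(G))$ and $M_2:=p_2(\SSS_T(G))$. Since $\SSS_T(G)$ is characteristic in $G$, these are normal in $L_1$ and $L_2$ respectively, so they are natural candidates. I would first observe that, because $N_1\cong N_2$ and one of the two contains a subnormal subgroup isomorphic to $T$, both do; in particular $\SSS_T(N_1)\ne 1$, and hence $\SSS_T(G)\ne 1$ because $\SSS_T(N_1)=\SSS_T(G)\cap N_1$ by Lemma~\ref{lem:FT}(\ref{lem:FT:ii}). By Lemma~\ref{lem:FT}(\ref{lem:FT:i}), $\SSS_T(G)\cong T^i$ with $i\ge 1$.

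Next I would pin down the isomorphism type of $M_1$ and $M_2$. Applying Lemma~\ref{lem:FT}(\ref{lem:FT:iii}) to $N_1$ produces a normal subgroup $K_1\le\SSS_T(G)$ with $\SSS_T(G)N_1=K_1\times N_1$ and $K_1\cong T^{j_1}$; consequently $M_1=p_1(\SSS_T(G))=\SSS_T(G)N_1/N_1\cong K_1\cong T^{j_1}$, and the same argument for $N_2$ gives $M_2\cong T^{j_2}$. Writing $\SSS_T(N_r)\cong T^{i_r}$ and comparing orders inside $\SSS_T(G)\cong T^i$ (using that $\SSS_T(G)=\SSS_T(N_r)\times K_r$) yields $i=i_1+j_1=i_2+j_2$. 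Since $\SSS_T$ is a characteristic functor and $N_1\cong N_2$, we have $\SSS_T(N_1)\cong\SSS_T(N_2)$, so $i_1=i_2$ and therefore $j_1=j_2=:k$, giving $M_1\cong M_2\cong T^k$.

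The step I expect to be the main obstacle, and the only place minimality enters, is showing $k\ge 1$. Suppose instead $k=0$. Then $\SSS_T(N_1)=\SSS_T(G)\cap N_1=\SSS_T(G)$ forces $\SSS_T(G)\le N_1$, and symmetrically $\SSS_T(G)\le N_2$; hence $\SSS_T(N_1)=\SSS_T(G)\cap N_1=\SSS_T(G)=\SSS_T(G)\cap N_2=\SSS_T(N_2)$ as subgroups of $G$. Applying Lemma~\ref{lem:minimalwitness}(\ref{char:functor}) with the characteristic functor $F=\SSS_T$ then gives $\SSS_T(N_1)=1$, contradicting $\SSS_T(N_1)\ne 1$. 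Thus $k\ge 1$.

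Finally, to prove that $L_1/M_1$ and $L_2/M_2$ are compatible, I would build a witness system based on $\overline{G}:=G/\SSS_T(G)$. Since the preimage in $G$ of $M_r$ is $\SSS_T(G)N_r$, we have $L_r/M_r\cong G/(\SSS_T(G)N_r)$, so $p_1,p_2$ induce surjections $\overline{p}_1:\overline{G}\to L_1/M_1$ and $\overline{p}_2:\overline{G}\to L_2/M_2$ with kernels $\ker(\overline{p}_1)=\SSS_T(G)N_1/\SSS_T(G)\cong N_1/\SSS_T(N_1)$ and $\ker(\overline{p}_2)\cong N_2/\SSS_T(N_2)$. An isomorphism $N_1\cong N_2$ carries $\SSS_T(N_1)$ onto $\SSS_T(N_2)$, since $\SSS_T$ is a characteristic functor, and therefore descends to an isomorphism of these kernels. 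Hence $(\overline{G},\overline{p}_1,\overline{p}_2)$ is a witness system for the compatibility of $(L_1/M_1,L_2/M_2)$, as required. The remaining checks (normality of $M_1,M_2$, the order computations, and the identifications $L_r/M_r\cong G/(\SSS_T(G)N_r)$) are routine.
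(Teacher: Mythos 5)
Your proof is correct, and it takes a genuinely different (though closely related) construction from the paper's. The paper sets $K_1:=\SSS_T(N_1)$, $K_2:=\SSS_T(N_2)$ and takes the \emph{cross} images $M_1:=p_1(K_2)$, $M_2:=p_2(K_1)$, keeping $G$ itself as the witness group for $(L_1/M_1,L_2/M_2)$: the kernels of the composite maps are $K_2N_1$ and $K_1N_2$, and their isomorphism is assembled from the direct-product decompositions $K_1N_2\cong T^{i-j}\times N_2$ and $K_2N_1\cong T^{i-j}\times N_1$ supplied by Lemma~\ref{lem:FT}(\ref{lem:FT:iii}). You instead take $M_r:=p_r(\SSS_T(G))$ --- which in general is larger than the paper's $M_r$, since $\SSS_T(G)$ may contain components of $G$ lying in neither kernel --- and you pass to the quotient $\overline{G}:=G/\SSS_T(G)$, where the kernels become canonically isomorphic to $N_r/\SSS_T(N_r)$ and are matched by any isomorphism $N_1\cong N_2$, because $\SSS_T$ is a characteristic functor. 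Both arguments invoke minimality at exactly the same point and in the same way: Lemma~\ref{lem:minimalwitness}(\ref{char:functor}) applied to $F=\SSS_T$ rules out the degenerate case $k=0$ (your reduction of $k=0$ to $\SSS_T(N_1)=\SSS_T(G)=\SSS_T(N_2)$ is sound, as is your order count $i=i_r+j_r$, which follows from $\SSS_T(G)=\SSS_T(N_r)\times K_r$ via Dedekind's law and Lemma~\ref{lem:FT}(\ref{lem:FT:ii})). What your route buys: the witness group strictly shrinks ($|\overline{G}|<|G|$ since $\SSS_T(G)\neq 1$), and the isomorphism of kernels is induced by a single isomorphism $N_1\to N_2$ rather than built by hand from direct-product decompositions. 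What the paper's route buys: the subgroups $M_r$ lie inside $p_r(N_{3-r})$, i.e.\ inside the Sims pair of Lemma~\ref{lem:Sims}, so the construction stays local to the two kernels; but since the lemma is purely an existence statement, either choice serves equally well for its application in Corollary~\ref{cor:radcomp}.
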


\begin{proof}
Let $N_1:=\ker(p_1)$ and $N_2:=\ker(p_2)$.  Since $N_1\cong N_2$, we may assume that $N_1$ has a nonabelian simple subnormal subgroup $T$.
    Let $K_1:=\SSS_T(N_1)$, $K_2:=\SSS_T(N_2)$, and 
    let $K_0:=\SSS_T(N_1\cap N_2)$.
    By Lemma \ref{lem:FT} (\ref{lem:FT:i}) and (\ref{lem:FT:ii}),
    \[K_1\cong T^i\cong K_2\] and
    \[K_1\cap N_2=K_0=K_2\cap N_1\cong T^j\]
    for some integers $i$ and $j$. Let $k=i-j$.  If $k=0$, then $K_1=K_0=K_2$. However, this implies that $K_1=1$ by Lemma~\ref{lem:minimalwitness}(\ref{char:functor}) which
    is a contradiction, so $k\geq 1$.
    
     Let $M_1:=p_1(K_2)$ and let $M_2:=p_2(K_1)$.
    Since $K_1$ and $K_2$ are normal subgroups of $G$
    and $p_1$ and $p_2$ are surjective, $M_1$ and $M_2$
    are normal subgroups of $L_1$ and $L_2$ respectively.  We have \[M_1\cong K_2N_1/N_1\cong K_2/(K_2\cap N_1)\cong T^{i-j}\cong K_1/(K_1\cap N_2)\cong K_1N_2/N_2\cong M_2.\]

Let $\pi_1:L_1\rightarrow L_1/M_1$ and $\pi_2:L_2\rightarrow L_2/M_2$
    be the canonical projections.
    We claim that $(G,\pi_1\circ p_1,\pi_2\circ p_2)$ is a witness system for the compatibility of $(L_1/M_1, L_2/M_2)$.
    Note that $\ker(\pi_1\circ p_1)=K_2N_1$ and $\ker(\pi_2\circ p_2)=K_1N_2$.

 By Lemma~\ref{lem:FT} (\ref{lem:FT:iii}), 
    \[K_1N_2\cong T^{i-j}\times N_2\cong T^{i-j}\times N_1\cong K_2N_1.\]
    This completes the proof.    
\end{proof}

\begin{corollary}\label{cor:radcomp}
    Let $L_1$ and $L_2$ be nontrivial compatible groups.
    If 
    $\Rad(K_1)=1$ or $\Rad(K_2)=1$
    for every Sims pair $(K_1,K_2)$ of $(L_1,L_2)$,
    then there exist $M_1\trianglelefteq L_1$ and $M_2\trianglelefteq L_2$
    such that $1\ne M_1\cong M_2$, and $L_1/M_1$ and $L_2/M_2$ are compatible.

\end{corollary}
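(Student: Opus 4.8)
The plan is to work with a minimal witness system and combine Sims' Lemma with Lemma~\ref{lem:minimalwitness} and Lemma~\ref{lem:nonabelian-subnormal}. First I would fix a witness system $(G,p_1,p_2)$ for the compatibility of $(L_1,L_2)$ with $|G|$ minimal, and set $N_1:=\ker(p_1)$ and $N_2:=\ker(p_2)$. By Lemma~\ref{lem:Sims}, the pair $(p_1(N_2),p_2(N_1))$ is a Sims pair for $(L_1,L_2)$, so applying the hypothesis to this pair gives $\Rad(p_1(N_2))=1$ or $\Rad(p_2(N_1))=1$. Recalling from Example~\ref{examp:rad} that $\Rad=\OO_\K$ for $\K$ the Melnikov formation of solvable groups, this is precisely the hypothesis of Lemma~\ref{lem:minimalwitness}(\ref{lem:groupclass2}), which therefore yields $\Rad(N_1)=1=\Rad(N_2)$.

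Next I would split into two cases according to whether $N_1$ is trivial. If $N_1=1$, then $p_1$ is an isomorphism $G\cong L_1$; since $N_2\cong N_1=1$, the map $p_2$ is also an isomorphism, whence $L_1\cong L_2$. In this degenerate case the conclusion is immediate: fixing an isomorphism $\phi\colon L_1\to L_2$ and taking $M_1:=L_1$ and $M_2:=\phi(L_1)=L_2$ gives $1\ne M_1\cong M_2$, while $L_1/M_1$ and $L_2/M_2$ are both trivial and hence compatible.

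The substantive case is $N_1\ne 1$, where I would use $\Rad(N_1)=1$ to extract a nonabelian simple subnormal subgroup of $N_1$. Let $R$ be a minimal normal subgroup of $N_1$; being characteristically simple, $R\cong S^m$ for some simple group $S$. If $S$ were abelian, then $R$ would be a nontrivial solvable normal subgroup of $N_1$, forcing $R\le\Rad(N_1)=1$, a contradiction; hence $S$ is nonabelian simple. Each simple direct factor $T\cong S$ of $R$ satisfies $T\trianglelefteq R\trianglelefteq N_1$ and is thus a nonabelian simple subnormal subgroup of $N_1$. Now Lemma~\ref{lem:nonabelian-subnormal}, applied to the minimal witness system $(G,p_1,p_2)$ and this $T$, produces $M_1\trianglelefteq L_1$ and $M_2\trianglelefteq L_2$ with $M_1\cong M_2\cong T^k$ for some $k\ge 1$ and with $L_1/M_1$ and $L_2/M_2$ compatible; since $T\ne 1$ we get $M_1\ne 1$, which is exactly the desired conclusion.

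The only genuinely delicate point is the passage from a radical hypothesis on the Sims pair to a radical hypothesis on the kernels $N_1,N_2$: this is where the minimality of $|G|$ is essential, and it is handled entirely by Lemma~\ref{lem:minimalwitness}(\ref{lem:groupclass2}). Once $\Rad(N_1)=1$ is in hand, the remaining work---producing a component from a minimal normal subgroup and feeding it into Lemma~\ref{lem:nonabelian-subnormal}---is routine, so I expect no serious obstacle beyond correctly isolating the trivial-kernel case in which no component is available.
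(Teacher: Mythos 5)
Your proposal is correct and follows essentially the same route as the paper's proof: a minimal witness system, Sims' Lemma to produce the Sims pair $(p_1(N_2),p_2(N_1))$, Lemma~\ref{lem:minimalwitness}(\ref{lem:groupclass2}) with the solvable Melnikov formation to transfer the radical hypothesis to the kernels, the degenerate trivial-kernel case handled by $L_1\cong L_2$, and otherwise a minimal normal subgroup yielding a nonabelian simple subnormal subgroup to feed into Lemma~\ref{lem:nonabelian-subnormal}. The only (immaterial) differences are that you work with $N_1$ where the paper works with $N_2$, and you fill in the details of the characteristically-simple argument that the paper leaves implicit.
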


\begin{proof}
    Let $(G,p_1,p_2)$ be a witness system for the compatibility of $(L_1,L_2)$
    such that $|G|$ is minimal, let $N_1:=\ker(p_1)$ and let $N_2:=\ker(p_2)$.
    By Lemma~\ref{lem:Sims}, $(p_1(N_2),p_2(N_1))$ is a Sims pair for $(L_1,L_2)$.
    Without loss of generality,
    we can assume that \[\Rad(p_1(N_2))=1.\]
    Keeping in mind Example~\ref{examp:rad} and applying Lemma~\ref{lem:minimalwitness}(\ref{lem:groupclass2}), we obtain $\Rad(N_2)=1$. 
    If $N_2=1$, then $L_1\cong L_2$ and the conclusion holds with $M_1=L_1$ and $M_2=L_2$.
    Hence we can assume $N_2\ne 1$ and
    let $M$ be a minimal normal subgroup of $N_2$.  
    Since $\Rad(N_2)=1$,
    we have that $M\cong T^k$ for some nonabelian simple group $T$ and integer $k$. 
    Note that $T$ is subnormal in $N_2$. Applying Lemma \ref{lem:nonabelian-subnormal} completes the proof.
\end{proof}

%\begin{example}\label{examp:A5Z2}
%Let $\Omega=\{1,2,3,4,5\}$. It is not hard to see that $\Alt_5^5\times \Alt_5\wr_{\Omega}\Alt_5$ has two normal subgroups isomorphic to $\Alt_5^5$ such that the respective quotients are isomorphic to $\Alt_5\wr_{\Omega}\Alt_5$ and $\Alt_5^6$. It follows that $\Alt_5\wr_{\Omega}\Alt_5$ and $\Alt_5^6$ are compatible.

%Let $L_1:=\Alt_5\wr_{\Omega}\SSS_5$ and let $L_2:=\Alt_5^6\rtimes \mathbb{Z}_2=\Alt_5^6\rtimes\langle z\rangle$ where
%    \[(x_1,x_2,x_3,x_4,x_5,x_6)^z=(x_2,x_1,x_4,x_3,x_6,x_5)\] for
%    $(x_1,x_2,x_3,x_4,x_5,x_6)\in \Alt_5^6$. Note that $L_1$ has a unique normal subgroup $K_1$ of index $2$, with $K_1\cong \Alt_5\wr_{\Omega}\Alt_5$. Similarly, $L_2$ has a unique  normal subgroup $K_2$ of index $2$, with $K_2\cong \Alt_5^6$. By the previous paragraph, $K_1$ and $K_2$ are compatible hence $(K_1,K_2)$ is a Sims pair for $(L_1,L_2)$. It is not hard to show that the only nontrivial quotients of $L_1$ are $\mathbb{Z}_2$, $\SSS_5$ and $L_1$ and that $L_2$ does not have a quotient isomorphic to $\SSS_5$ or $L_1$, which implies that 
%  $(K_1,K_2)$ is the unique Sims pair for $(L_1,L_2)$.   Note that $\Rad(K_1)=\Rad(K_2)=1$.
%    One can check that $L_1$ and $L_2$ do not have any isomorphic nontrivial normal subgroups hence, by Corollary \ref{cor:radcomp},
%    $L_1$ and $L_2$ are not compatible. (CLEAN UP)
%\end{example}
%Now we give the proof of Theorem~\ref{thm:main}.
\begin{example}\label{examp:A6Z7}
It is not hard to see that $(\Alt_6\wr\Alt_6)\times\Alt_6^6 $ has two normal subgroups isomorphic to $\Alt_6^6$ such that the respective quotients are isomorphic to $\Alt_6^7$ and $\Alt_6\wr\Alt_6$. It follows that $\Alt_6^7$ and $\Alt_6\wr\Alt_6$ are compatible.

Let $L_1:=(\Alt_6\wr\Alt_6)\times\C_7$ and let $L_2:=\Alt_6\wr\C_7=\Alt_6^7\rtimes \C_7$. The groups $L_1$ and $L_2$ are not isomorphic and the only proper non-trivial normal subgroup of $L_2$ is $K_2:=\Alt_6^7$ with $L_2/K_2\cong\C_7$. The unique normal subgroup $K_1$ of $L_1$ such that $L_1/K_1\cong\C_7$ is $K_1:=\Alt_6\wr\Alt_6$. By the previous paragraph, $K_1$ and $K_2$ are compatible hence $(K_1,K_2)$ is the unique Sims pair for $(L_1,L_2)$. Note that $\Rad(K_1)=\Rad(K_2)=1$. By Corollary \ref{cor:radcomp}, $L_1$ and $L_2$ are not compatible.
\end{example}

Example~\ref{examp:A6Z7} shows that the converse of Lemma~\ref{lem:Sims} does not hold: if $(L_1,L_2)$ has a Sims pair, then it not necessarily the case that $L_1$ and $L_2$ are compatible.

\begin{theorem}\label{thm:main}
    Compatible groups with no abelian composition factors have compatible normal series.
\end{theorem}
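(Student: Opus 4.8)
The plan is to induct on $|L_1|$ and reduce the entire statement to Corollary~\ref{cor:radcomp}. The key observation is that, under the hypothesis of no abelian composition factors, the somewhat technical premise of that corollary is satisfied automatically. Indeed, let $(K_1,K_2)$ be any Sims pair for $(L_1,L_2)$. Then $K_1\trianglelefteq L_1$, so the composition factors of $K_1$ form a subset of those of $L_1$ and are therefore all nonabelian. Now $\Rad(K_1)$ is a solvable normal subgroup of $K_1$, so all of its composition factors are abelian; being also composition factors of $K_1$, they would have to be nonabelian, which is impossible unless $\Rad(K_1)=1$. Hence $\Rad(K_1)=1$ for every Sims pair, and Corollary~\ref{cor:radcomp} applies to $(L_1,L_2)$ whenever these groups are nontrivial.

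With this in hand I would handle the base case first. Compatible groups have equal order, since in any witness system $|L_1|=|G|/|\ker(p_1)|=|G|/|\ker(p_2)|=|L_2|$; thus $L_1=1$ forces $L_2=1$, and the trivial series are vacuously compatible. For the inductive step, assume $L_1\neq 1$, so also $L_2\neq 1$. By the observation above and Corollary~\ref{cor:radcomp}, there exist $M_1\trianglelefteq L_1$ and $M_2\trianglelefteq L_2$ with $1\neq M_1\cong M_2$ such that $L_1/M_1$ and $L_2/M_2$ are compatible. Both quotients again have no abelian composition factors, since the composition factors of a quotient lie among those of the whole group, and $|L_1/M_1|<|L_1|$ because $M_1\neq 1$. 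The induction hypothesis then furnishes compatible normal series for $L_1/M_1$ and $L_2/M_2$.

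It remains to lift these upstairs. Writing $\pi_1\colon L_1\to L_1/M_1$ and $\pi_2\colon L_2\to L_2/M_2$ for the canonical projections, I would form the preimages under $\pi_1$ (respectively $\pi_2$) of the terms of the compatible normal series of $L_1/M_1$ (respectively $L_2/M_2$), and adjoin the bottom step $1\trianglelefteq M_1$ (respectively $1\trianglelefteq M_2$). Since the preimage under a surjection of a normal subgroup is normal, every term is normal in $L_1$ (respectively $L_2$), so these are genuine \emph{normal} series and not merely subnormal ones; this is exactly the point where the hypothesis strengthens the conclusion of Corollary~\ref{cor:compsubnormal}. The factors above $M_1$ are isomorphic, by the correspondence theorem, to the corresponding factors of the series downstairs, which match those of $L_2$ by construction, while the bottom factors are $M_1\cong M_2$; hence the two lifted series are compatible.

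Because the substantive work is already carried by Corollary~\ref{cor:radcomp}, I do not expect a serious obstacle. The only points demanding care are the two closure properties of the class of groups with no abelian composition factors---closure under passing to normal subgroups (used to force $\Rad(K_1)=1$) and closure under quotients (used to run the induction on the factor groups)---together with the straightforward bookkeeping that the preimage construction yields matching factors in the correct order.
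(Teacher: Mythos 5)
Your proposal is correct and follows essentially the same route as the paper's own proof: induction on $|L_1|$, the observation that the no-abelian-composition-factors hypothesis forces $\Rad(K_1)=\Rad(K_2)=1$ for every Sims pair so that Corollary~\ref{cor:radcomp} applies, and the lifting of the compatible normal series of $L_1/M_1$ and $L_2/M_2$ through the canonical projections with $M_1\cong M_2$ adjoined at the bottom. Your write-up in fact spells out several details the paper leaves implicit (the equal-order base case, the closure of the hypothesis under normal subgroups and quotients, and the normality of the lifted terms), all of which are verified correctly.
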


\begin{proof}
    Let $L_1$ and $L_2$ be compatible groups with no abelian composition factors. We must show that $L_1$ and $L_2$ have compatible normal series. We prove this by induction on $|L_1|$. Clearly this is true if $|L_1|=|L_2|=1$, so we assume this is not the case. Let $(K_1,K_2)$ be a Sims pair for $(L_1,L_2)$. Since $L_1$ and $L_2$ have no abelian composition factors, we have $\Rad(K_1)=1=\Rad(K_2)$. We may thus apply Corollary~\ref{cor:radcomp} to conclude that there exist $M_1\trianglelefteq L_1$ and $M_2\trianglelefteq L_2$
    such that $1\ne M_1\cong M_2$, and $L_1/M_1$ and $L_2/M_2$ are compatible. By induction, $L_1/M_1$ and $L_2/M_2$ have
    compatible normal series
    \[1\trianglelefteq S_{1,1}\trianglelefteq\cdots\trianglelefteq S_{n,1}=L_1/M_1\]
    and
    \[1\trianglelefteq S_{1,2}\trianglelefteq\cdots\trianglelefteq S_{n,2}=L_2/M_2.\]
For $j\in\{1,2\}$, let $\pi_j$ be the natural projection from $L_j$ to $L_j/M_j$ and, for $1\leq i\leq n$, let $\widetilde{S}_{i,j}$ be the preimage of $S_{i,j}$ with respect to $\pi_j$.
    Then $L_1$ and $L_2$ have compatible normal series
    \[1\trianglelefteq M_1\trianglelefteq \widetilde{S}_{1,1}\trianglelefteq \cdots\trianglelefteq\widetilde{S}_{n,1}=L_1\]
    and 
    \[1\trianglelefteq M_2\trianglelefteq \widetilde{S}_{1,2}\trianglelefteq \cdots\trianglelefteq\widetilde{S}_{n,2}=L_2.\]
    This completes the proof.
\end{proof}

%\begin{example}\label{examp:A5}
%    Let $\Omega_1:=\Alt_5$, $\Omega_2:=\{1,2,3,4,5\}$ and $\Omega_3:=\left(\begin{matrix}        \Omega_2\\        2    \end{matrix}\right)$.     Let $\Alt_5$ act on $\Omega_1$ by    right multiplication     and on $\Omega_2$ and $\Omega_3$ in the natural way.   Define \[L_1:=\Alt_5\wr_{\Omega_1}\Alt_5\]   and    \[L_2:=(\Alt_5\wr_{\Omega_2}\Alt_5)\times(\Alt_5\wr_{\Omega_3}\Alt_5)^5.\] Note that $|\Omega_1|=60$,  $|\Omega_2|=5$ and $|\Omega_3|=10$ so $L_1$ and $L_2$ have the same multiset of composition factors, namely $61=60+1=(5+1)+5(10+1)$ copies of $A_5$. On the other hand, one can check that   $L_1$ has a unique proper nontrivial normal subgroup, which is isomorphic to   $\Alt_5^{60}$ and $L_2$ does not have a normal subgroup    isomorphic to   $\Alt_5^{60}$.   Hence $L_1$ and $L_2$ do not have compatible normal series and, by Theorem~\ref{thm:main},   $L_1$ and $L_2$ are not compatible. 
%\end{example}

\section*{Acknowledgements}
The authors gratefully acknowledge the financial support of the Marsden Fund UOA1824 and the University of Auckland.
\bibliography{bib}
\bibliographystyle{plain}
\end{document}